\newtheorem{theorem}{Theorem}
\newtheorem{proposition}{Proposition}
\newtheorem{remark}{Remark}
\newtheorem{conjecture}{Conjecture}
\newcommand{\Tr}{\operatorname{Tr}}
\newcommand{\Var}{\operatorname{Var}}
\title{Boolean and Free Symmetrization of Bernoulli Distributions}
\author{Sukrit Chakraborty%
  \thanks{Department of Mathematics, Achhruram Memorial College, Jhalda, Purulia, India 723202. 
  Email: \texttt{sukrit049@gmail.com}. 
  \newline \textbf{Mathematics Subject Classification (2020).} Primary 46L53; Secondary 60E10, 46L54. 
  \newline \textbf{Keywords.} Boolean probability, Boolean convolution, free probability, Bernoulli distribution, symmetrization, symmetry resistance.}
}
\date{}
\begin{document}

\maketitle

\begin{abstract}
	We investigate variance bounds under symmetry constraints in classical, free, and Boolean probability, focusing on Bernoulli distributions and their noncommutative analogues, projections with trace \(p\). We show that symmetrizers under classical, free, and Boolean convolution satisfy a sharp variance bound of \(pq\), with equality for the reflection law. Additionally, we highlight phenomena specific to Boolean convolution, demonstrating that non-symmetric measures can produce symmetric convolutions and that symmetrizers may be non-unique for certain measures. These results unify variance inequalities across probabilistic frameworks and offer insights for quantum information and noncommutative stochastic modeling.
\end{abstract}

\section{Introduction}\label{sec:intro}

Symmetry constraints impose strong quantitative restrictions on variances in probability theory. A classical example is the following: let $X\sim \mathrm{Bernoulli}(p)$ with $p\neq \tfrac12$, and let $Y$ be independent of $X$ such that $X+Y$ is symmetric about $0$. Then, necessarily, $\mathrm{Var}(Y)\geq pq$, with equality precisely when $Y \stackrel{d}{=} -X$ \cite{Kagan1999, Pal2008}. 
This simple observation illustrates how symmetry alone can enforce a sharp lower bound on variance. 

Free and Boolean probability are noncommutative analogues of classical probability in which independence is replaced by \emph{free} or \emph{Boolean} independence, respectively \cite{Voiculescu1993,SpeicherWoroudi}. In the noncommutative setting, projections with trace $p$ serve as the natural analogue of Bernoulli($p$) random variables. The goal of this article is to explore whether the classical variance bound under symmetrization extends naturally to these noncommutative frameworks.

Our main results show that for a projection $e$ with trace $p\in [0,1]\setminus \{\tfrac12\}$, any self-adjoint element $y$ that is freely or Boolean independent of $e$ and such that $e+y$ has a symmetric distribution satisfies
\[
\phi(y^2)\geq p,
\]
with equality attainable by the obvious candidate $y \stackrel{d}{=} -e$. Thus, despite the different notions of independence, symmetry induces the same sharp variance constraint across classical, free, and Boolean probability. 
These findings reinforce the structural parallels among the three frameworks and provide a foundation for future studies of symmetry-induced inequalities in noncommutative probability.

Beyond their theoretical appeal as noncommutative analogues of classical inequalities, these variance bounds have potential applications in quantum information theory, where projections model quantum events, and in the study of noncommutative stochastic processes. 
Moreover, free and Boolean independence provide frameworks for modeling noncommuting observables and Boolean stochastic systems, highlighting the broader relevance of symmetry-induced variance constraints.

In addition to these variance bounds, we further investigate symmetry phenomena specific to Boolean convolution. 
Interestingly, Boolean convolution allows for situations where symmetry emerges from entirely non-symmetric components, as well as cases where there are multiple symmetrizers with the same variance. 
In particular, we show:
\begin{itemize}
	\item two non-symmetric measures with different supports and variances may convolve to yield a symmetric measure, thus providing examples of “symmetry non-resistant” distributions;
	\item there exist probability measures for which distinct symmetrizers, differing in higher-order terms of their $K$-transform/$R$-transform but having the same variance, proving non-uniqueness of symmetrizers in the Boolean/free setting, respectively.
\end{itemize}
These additional results emphasize that, while variance rigidity is preserved under Boolean or free symmetrization, the mechanisms of producing or achieving symmetry can be surprisingly flexible.


Throughout this article, all probability measures under consideration are assumed to have moments of all orders, unless explicitly stated otherwise. We briefly outline the organization of this article. In Section~\ref{sec:prelim}, we provide necessary preliminaries on Boolean and free probability and state our main results, including Theorems~\ref{thm:free} and~\ref{thm:boolean}. Section~\ref{sec:symmetry} explores symmetry phenomena specific to Boolean convolution, highlighting the existence of ``symmetry non-resistant'' measures (Proposition~\ref{prop:1}) and the uniqueness of minimal-variance symmetrizers for Bernoulli$(p)$ (Proposition~\ref{prop:2}). Section~\ref{sec:app} presents a concrete quantum information example that realizes the symmetry--variance principle within a two-level system, while Section~\ref{sec:future} discusses potential directions for future research, thereby connecting the abstract free and Boolean convolution results to finite-dimensional noncommutative settings. Section~\ref{sec:proofs} contains detailed proofs of these theorems and propositions. This structure guides the reader from classical intuition to noncommutative generalizations and concrete examples illustrating the subtleties of Boolean symmetrization.

\section{Preliminaries and Main Results}\label{sec:prelim}

We begin by recalling (Theorem 1 of \cite{Kagan1999}) the classical version of the variance bound.

\begin{theorem}[Classical case]\label{thm:classical}
	Let $X\sim\mathrm{Bernoulli}(p)$ with $p\neq \tfrac12$ and variance $\mathrm{Var}(X)=pq$. 
	If $Y$ is independent of $X$ and $X+Y$ has a distribution symmetric about $0$, then
	\[
	\mathrm{Var}(Y)\;\geq\; pq,
	\]
	with equality if $Y\stackrel{d}{=}-X$.
\end{theorem}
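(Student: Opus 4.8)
The plan is to reduce the stated variance inequality to a single pointwise ``dual certificate'' inequality and then integrate it against the law of $Y$ (we may assume $E[Y^2]<\infty$, else there is nothing to prove). First I would normalize the problem: symmetry of $X+Y$ about $0$ forces every odd moment of $X+Y$ to vanish, and in particular $E[X+Y]=0$, so that $E[Y]=-E[X]=-p$ and hence
\[
\mathrm{Var}(Y)=E[Y^2]-p^2 .
\]
Thus the claim $\mathrm{Var}(Y)\ge pq=p-p^2$ is equivalent to $E[Y^2]\ge p=E[X^2]$, which is exactly the form in which the free and Boolean analogues are stated ($\phi(y^2)\ge p$).

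Next I would encode the symmetry in a usable way. A law is symmetric about $0$ if and only if $E[\psi(X+Y)]=0$ for every odd test function $\psi$, and since $X\in\{0,1\}$ with $X\perp Y$ this reads
\[
q\,E[\psi(Y)]+p\,E[\psi(Y+1)]=0 \qquad\text{for all odd }\psi .
\]
The key structural point is that $E[Y^2]$ is an \emph{even} functional of $Y$, while symmetry only controls \emph{odd} functionals; hence no finite combination of moment identities can produce the bound, and the positivity of the law $\mu_Y$ of $Y$ must enter. The same obstruction is visible on the Fourier side: writing $\varphi_{X+Y}(t)=(q+pe^{it})\varphi_Y(t)$ with $\varphi_{X+Y}$ real, the naive estimate $|\varphi_{X+Y}(t)|\le|q+pe^{it}|$ yields only $E[(X+Y)^2]\ge pq$, a factor of two too weak, precisely because it uses $|\varphi_Y|\le 1$ rather than the full positive-definiteness of $\varphi_Y$.

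The plan is therefore a duality argument. I would construct a single odd function $\psi$, of at most linear growth (so the pairing below is justified), such that
\[
y^2-\big(q\psi(y)+p\psi(y+1)\big)\ge p \qquad\text{for all }y\in\mathbb{R},
\]
with equality exactly on the set $\{-1,0\}$, the support of the extremal symmetrizer $-X$. Granting such a $\psi$, integrating this inequality against $\mu_Y$ and using $q\,E[\psi(Y)]+p\,E[\psi(Y+1)]=0$ makes the $\psi$-terms drop out and gives $E[Y^2]\ge p$ at once. Imposing equality across the interval $[-1,0]$ together with oddness already pins down the active part of the certificate: solving the resulting two-term functional equation yields
\[
\psi(s)=\frac{s(s-2p)}{2p-1}\qquad (s\in[0,1]),
\]
extended oddly to $[-1,0]$, and one checks that $\psi(1)=-1$ as required.

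The hard part will be verifying that $\psi$ extends to all of $\mathbb{R}$ in such a way that the pointwise inequality continues to hold off the active interval $[-1,1]$; this is where the genuine global, positivity-based content of the theorem lives, and it is exactly the step that the two first-order heuristics above fail to reach. Finally, for the equality statement I would simply take $Y\stackrel{d}{=}-X$: then $X+Y$ takes the values $+1$ and $-1$ each with probability $pq$ and $0$ otherwise, so it is symmetric, and $E[Y^2]=p$, whence the bound is attained.
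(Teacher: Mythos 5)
Your setup is sound and matches the strategy the paper itself uses (for the free case, following Harremo\"es--Vignat for the classical one): reduce to $E[Y^2]\ge p$ via $E[Y]=-p$, and look for an odd certificate $\psi$ with $q\psi(t)+p\psi(t+1)\le t^2-p$ pointwise, so that integrating against $\mu_Y$ and killing the odd part by symmetry gives the bound. But the proposal stops exactly where the proof begins: the global construction of $\psi$ is the entire content of the theorem, and you explicitly defer it (``the hard part will be verifying that $\psi$ extends to all of $\mathbb{R}$\dots''). As written, you only have a function on $[-1,1]$ and no argument that an odd extension satisfying the two-term inequality exists for every $p\ne\tfrac12$. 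That is a genuine gap, not a routine verification: the inequality couples $\psi$ on $[n,n+1]$ to $\psi$ on $[n+1,n+2]$, oddness reflects these constraints back across the origin, and one must check that the resulting upper and lower envelopes never cross on any interval, uniformly in $p$. Moreover, your particular seed $\psi(s)=\tfrac{s(s-2p)}{2p-1}$ on $[0,1]$, chosen so that equality holds on \emph{all} of $[-1,0]$, is the tightest possible choice there and hence leaves zero slack for the extension precisely where the recursion starts; it is not evident (and you do not show) that this seed propagates.

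The way the paper closes this gap is worth contrasting with your plan. It does not impose equality on the whole interval; instead it takes the $2$-antiperiodic triangle wave $h$ ($h(t)=t$ on $(-\tfrac12,\tfrac12)$, $h(t+1)=-h(t)$) and sets $\psi(t)=\tfrac{h(t)}{q-p}-t$. The antiperiodicity makes the two-term expression collapse, $qh(t)+ph(t+1)=(q-p)h(t)$, so the global certificate inequality reduces to the single elementary fact $h(t)\le t(t+1)$ for all $t$ (the triangle wave is tangent to the parabola at $t=0$ and $t=-1$ and bounded, while the parabola is convex). Equality then holds only at $t\in\{0,-1\}$, which also localizes the equality case. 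If you want to salvage your approach, either adopt an antiperiodic ansatz of this kind so the extension problem disappears, or carry out the interval-by-interval propagation for your quadratic seed in full, including the sign analysis distinguishing $p<\tfrac12$ from $p>\tfrac12$. The reduction to $E[Y^2]\ge p$, the odd-test-function encoding of symmetry, and the equality check for $Y\stackrel{d}{=}-X$ are all correct as stated.
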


We now present the free and Boolean analogues of this result, which constitute the main contribution of this work.

\begin{theorem}[Free case]\label{thm:free}
	Let $(\mathcal{A},\phi)$ be a tracial $W^*$-probability space and let $e\in \mathcal{A}$ be a projection with $\phi(e)=p\in [0,1]\setminus\{\tfrac12\}$. 
	Consider self-adjoint elements $y\in\mathcal{A}$ which are freely independent from $e$ and satisfy the symmetry condition that the distribution of $e+y$ is symmetric about $0$, i.e.
	\[
	\phi\!\big((e+y)^{2k+1}\big)=0 \quad \text{for all } k\geq 0.
	\]
	Then the minimal value of $\phi(y^2)$ over all such $y$ is $p$, and this minimum is attained.
\end{theorem}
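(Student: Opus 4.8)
The plan is to transport the problem to the level of free cumulants, where freeness linearizes everything, and then to isolate the genuine analytic core. First I would record the moment--cumulant dictionary: since $e$ and $y$ are freely independent, free cumulants are additive, $\kappa_n(e+y)=\kappa_n(e)+\kappa_n(y)$ for all $n\ge 1$. Because a bounded law is symmetric about $0$ precisely when all its odd moments vanish, and the noncrossing moment--cumulant formula preserves parity (for odd $n$ every $\pi\in NC(n)$ has a block of odd size, so $m_n$ is a sum of products containing an odd cumulant), symmetry of $e+y$ is equivalent to the vanishing of all odd free cumulants of $e+y$. Thus the symmetry hypothesis reads
\[
\kappa_{2k+1}(y)=-\kappa_{2k+1}(e)\qquad(k\ge 0),
\]
and the case $k=0$ already forces $\phi(y)=\kappa_1(y)=-p$.

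Next I would reduce the target inequality to a statement about the second cumulant alone. Since $\phi(y^2)=\kappa_2(y)+\kappa_1(y)^2=\kappa_2(y)+p^2$ and $\kappa_2(e)=\mathrm{Var}(e)=p-p^2=pq$, the desired bound $\phi(y^2)\ge p$ is exactly equivalent to the free-variance bound $\kappa_2(y)\ge\kappa_2(e)=pq$. The attainment half is then immediate: taking $y\stackrel{d}{=}-e$ (the negative of a free copy of $e$) gives $\phi(y^2)=\phi(e)=p$, while $e+y\stackrel{d}{=}e-e'$ has law $\mu_e\boxplus\check{\mu}_e$, the free convolution of $\mu_e$ with its own reflection, which is symmetric; hence the value $p$ is realized.

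The main obstacle is the lower bound $\kappa_2(y)\ge pq$, and the essential difficulty is that symmetry constrains only the odd cumulants, leaving $\kappa_2$ (and every even cumulant) formally unconstrained. Consequently the bound cannot follow from any finite set of moment identities: with the odd cumulants prescribed, no finite truncation of the Hankel positivity conditions forces a positive lower bound on $\kappa_2$, so the constraint must instead be extracted from the global realizability of $\mu_y$ as a genuine probability measure. To capture this I would pass to the $R$-transform. The projection law has the explicit algebraic $R$-transform
\[
R_{\mu_e}(w)=\frac{w-1+\sqrt{w^2+2(p-q)w+1}}{2w},
\]
whose expansion recovers $\kappa_1(e)=p$ and $\kappa_2(e)=pq$. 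Symmetry now becomes the condition that $R_\nu:=R_{\mu_e}+R_{\mu_y}$ is an odd function, while $\mu_y=\nu\boxminus\mu_e$ must remain a probability measure. Because $\mu_e$ is carried by two atoms its Cauchy transform is rational, which makes the subordination equation for $\mu_e\boxplus\mu_y$ quadratic and hence semi-explicit; I would use this to prove that among all symmetric $\nu$ admitting a free deconvolution by $\mu_e$, the variance is minimized by the free symmetrization $\mu_e\boxplus\check{\mu}_e$, forcing $\kappa_2(\nu)\ge 2pq$ and therefore $\kappa_2(y)\ge pq$.

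Finally I would record the equality analysis: tracing the subordination argument, $\kappa_2(y)=pq$ should occur only when the odd part of $R_{\mu_y}$ matches that of $\check{\mu}_e$ and the even cumulants collapse to those of $\check{\mu}_e$ as well, i.e.\ when $y\stackrel{d}{=}-e$, matching the classical rigidity of Theorem~\ref{thm:classical}. The hardest and least routine step is the extremality of the free symmetrization; establishing it rigorously --- rather than merely verifying the candidate --- is where the real work lies, and I expect the positivity/subordination analysis to be delicate precisely because the even cumulants are left free by the symmetry hypothesis alone.
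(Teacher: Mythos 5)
Your reduction is correct and cleanly stated: by additivity of free cumulants and the parity argument on noncrossing partitions, symmetry of $e+y$ is equivalent to $\kappa_{2k+1}(y)=-\kappa_{2k+1}(e)$ for all $k\ge 0$, and since $\phi(y^2)=\kappa_2(y)+p^2$ the claimed bound is equivalent to $\kappa_2(y)\ge pq$. The attainment half is also fine. But the lower bound --- which is the entire content of the theorem --- is never actually established. You say you ``would use'' the quadratic subordination equation for $\mu_e\boxplus\mu_y$ ``to prove that among all symmetric $\nu$ admitting a free deconvolution by $\mu_e$, the variance is minimized by the free symmetrization''; that sentence is a restatement of the theorem, not an argument for it. No mechanism is exhibited by which positivity of $\mu_y$, together with the prescribed odd cumulants, forces $\kappa_2(y)\ge pq$, and you yourself flag this step as ``where the real work lies.'' As written, the proposal is a plausible programme whose central step is missing.

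For comparison, the paper takes a different and more elementary route: a duality/test-function argument modeled on Harremo\"es--Vignat. It builds an odd function $\psi(t)=h(t)/(q-p)-t$ from a sawtooth $h$ satisfying the pointwise inequality $q\psi(t)+p\psi(1+t)\le t^2-p$, and combines $\phi(\psi(e+y))=0$ with the expansion $\phi(\psi(e+y))=q\,\phi(\psi(y))+p\,\phi(\psi(1+y))$ to get $\phi(y^2)\ge p$ in one line. This directly conflicts with your assertion that the bound ``cannot follow from any finite set of moment identities'': the paper extracts it from a single functional identity plus positivity of $\phi$. The point you should interrogate is that this expansion is the classical conditioning formula for a Bernoulli variable, and its validity under free (rather than tensor) independence is exactly the kind of question your $R$-transform/subordination framework is suited to settle --- but to turn your outline into a proof you would have to actually carry out that analysis rather than announce it.
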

The next theorem demonstrates that a similar phenomenon holds in the Boolean setting. 
Here, we explicitly mention that the algebra is unital because, in the Boolean case, the underlying $W^*$-algebra may or may not possess a unit. 
Even in this generality, the same sharp lower bound on the variance is obtained under the symmetry condition.

\begin{theorem}[Boolean case]\label{thm:boolean}
	Let $(\mathcal{A},\phi)$ be a unital tracial $W^*$-probability space and let $e\in \mathcal{A}$ be a projection with $\phi(e)=p\in [0,1]\setminus\{\tfrac12\}$. 
	Consider self-adjoint elements $y\in\mathcal{A}$ which are Boolean independent from $e$ and satisfy the symmetry condition that the distribution of $e+y$ is symmetric about $0$, i.e.
	\[
	\phi\!\big((e+y)^{2k+1}\big)=0 \quad \text{for all } k\geq 0.
	\]
	Then the minimal value of $\phi(y^2)$ over all such $y$ is $p$, and this minimum is attained.
\end{theorem}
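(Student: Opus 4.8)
The plan is to linearize the problem through the Boolean cumulant transform (the \emph{self-energy}) and then reduce the variance bound to an elementary optimization over positive measures. For a compactly supported probability measure $\mu$ with Cauchy transform $G_\mu(z)=\int (z-t)^{-1}\,d\mu(t)$, write $F_\mu=1/G_\mu$ and let $K_\mu(z)=z-F_\mu(z)$ be its self-energy. I would invoke two facts: (i) Boolean independence linearizes this transform, so the distribution $\rho$ of $e+y$ satisfies $K_\rho=K_{\mu_e}+K_{\mu_y}$; and (ii) every such self-energy admits an integral representation $K_\mu(z)=\beta_1(\mu)+\int (z-t)^{-1}\,d\sigma_\mu(t)$, where $\beta_1(\mu)$ is the mean of $\mu$ and $\sigma_\mu\ge 0$ is a finite measure whose total mass equals the variance $\beta_2(\mu)$. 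Fact (ii) is the Nevanlinna--Pick representation applied to the Pick function $-(K_\mu-\beta_1(\mu))$, which vanishes at infinity; conversely every pair $(\beta_1,\sigma)$ arises from a genuine probability law, so this is a faithful parametrization.

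Next I would compute the two ingredients explicitly. Since $e$ is a projection of trace $p$, its distribution is the Bernoulli law $q\delta_0+p\delta_1$ (writing $q=1-p$), and a short calculation gives $K_{\mu_e}(z)=\frac{pz}{z-q}=p+\frac{pq}{z-q}$, so that $\beta_1(\mu_e)=p$ and $\sigma_{\mu_e}=pq\,\delta_{q}$, a single atom of mass $pq$ at the point $q$. By additivity, $\beta_1(\rho)=p+\beta_1(\mu_y)$ and $\sigma_\rho=pq\,\delta_{q}+\sigma_{\mu_y}$. The symmetry hypothesis --- vanishing of all odd moments of $e+y$ --- is equivalent to $\rho$ being symmetric about $0$, hence to $G_\rho$, and therefore $K_\rho$, being an odd function. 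Reading this off the integral representation, oddness forces $\beta_1(\rho)=0$ (so $\beta_1(\mu_y)=-p$) and, crucially, forces the self-energy measure $\sigma_\rho$ to be symmetric about the origin.

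The bound then drops out of positivity. Because $q\neq 0$ for $p\in(0,1)$, the atom of $\sigma_{\mu_e}$ sits at $+q\neq -q$, so symmetry of $\sigma_\rho=pq\,\delta_q+\sigma_{\mu_y}$ requires $\sigma_\rho(\{-q\})=\sigma_\rho(\{q\})\ge pq$; since $\delta_q$ contributes nothing at $-q$, this mass comes entirely from $\sigma_{\mu_y}$, giving $\sigma_{\mu_y}(\{-q\})\ge pq$. Hence the variance of $y$ is $\beta_2(\mu_y)=\sigma_{\mu_y}(\mathbb{R})\ge \sigma_{\mu_y}(\{-q\})\ge pq$, and therefore $\phi(y^2)=\beta_2(\mu_y)+\beta_1(\mu_y)^2\ge pq+p^2=p$. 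Equality holds exactly when $\sigma_{\mu_y}=pq\,\delta_{-q}$, which is precisely the self-energy of $-e$; so the minimum value $p$ is attained by an independent copy with $y\stackrel{d}{=}-e$. The edge cases $p\in\{0,1\}$, where the trace forces $e\in\{0,1\}$, are checked directly.

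The main obstacle I anticipate is fact (ii): rigorously justifying the positive-measure representation of the self-energy and, equally, the equivalence between symmetry of the distribution $\rho$ and symmetry of the abstract measure $\sigma_\rho$. This rests on the Nevanlinna theory of reciprocal Cauchy transforms (that $F_\mu$ maps the upper half-plane into itself with $\mathrm{Im}\,F_\mu\ge \mathrm{Im}\,z$), together with determinacy of the moment problem for compactly supported laws, which is what lets me pass between the moment formulation of symmetry in the statement and the transform formulation used in the argument. Once that reduction is in place, the remaining content is just the atomic-mass estimate above.
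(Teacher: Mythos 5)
Your argument is correct, but it proceeds by a genuinely different route than the paper. The paper's proof of the Boolean case is given only by reference to the free case, which is a duality argument: one builds an odd test function $\psi(t)=h(t)/(q-p)-t$ from a sawtooth $h$ satisfying $q\psi(t)+p\psi(1+t)\le t^2-p$, and combines $\phi(\psi(e+y))=0$ with a conditioning-type expansion $\phi(\psi(e+y))=q\,\phi(\psi(y))+p\,\phi(\psi(1+y))$. You instead exploit the exact linearization of Boolean convolution by the self-energy $K_\mu=z-1/G_\mu$ together with its Nevanlinna representation $K_\mu(z)=\beta_1(\mu)+\int(z-t)^{-1}\,d\sigma_\mu(t)$, reducing everything to the statement that the symmetric measure $pq\,\delta_q+\sigma_{\mu_y}$ must carry mass at least $pq$ at $-q$. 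Your computation of $K_{\mu_e}$, the identification of $\sigma_\mu(\mathbb{R})$ with the variance, and the equivalence (via moment determinacy for compactly supported laws) between vanishing odd moments and symmetry of $\sigma_\rho$ are all sound, and the ingredients you flag as needing care are indeed the standard Speicher--Woroudi/Nevanlinna facts. Two points of comparison are worth recording. First, your route is intrinsically Boolean and cannot be transported to the free case, whereas the paper's test-function scheme is meant to be uniform across the three independences; on the other hand, the paper's scheme rests on the moment-transfer identity $\phi(\psi(e+y))=q\,\phi(\psi(y))+p\,\phi(\psi(1+y))$, which is transparent for classical independence but requires genuine justification for Boolean (and free) independence, and your argument sidesteps that issue entirely. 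Second, and notably, your proof never uses $p\neq\tfrac12$: the atom of $\sigma_{\mu_e}$ sits at $q$, and the only degeneracy occurs at $q=0$. You therefore prove the bound $\phi(y^2)\ge p$ for every $p\in[0,1]$, which in the Boolean setting settles the critical case $p=\tfrac12$ that the paper lists as open; you should state this explicitly rather than leaving it implicit.
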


\begin{remark}
	Theorems~\ref{thm:classical}, \ref{thm:free}, and \ref{thm:boolean} together reveal a unifying pattern: 
	whether one considers classical, free, or Boolean independence, the presence of symmetry forces any symmetrizer to have variance at least $p(1-p)$. 
	The common form of the bound underscores a structural parallel between these three probabilistic frameworks, despite their very different notions of independence.
\end{remark}

\section{Symmetry Phenomena under Boolean and Free Convolutions} \label{sec:symmetry} 

Theorem~\ref{thm:boolean} establishes that in a Boolean setting, imposing a symmetry condition on the sum $e+y$ of a projection $e$ and a Boolean independent self-adjoint element $y$ leads to a minimal variance constraint on $y$, specifically $\phi(y^2)\ge p$, reflecting a rigidity imposed by the symmetry requirement. In contrast, Proposition~\ref{prop:1} shows that, in the Boolean convolution setting, symmetry can emerge from combining two non-symmetric measures with different supports and variances. This provides an explicit example of “symmetry non-resistant” probability measures, analogous to the classical notion of random variables whose sum is symmetric despite the summands being entirely non-symmetric. Consequently, the observation offers insights into the interplay between structure, support, and variance in non-commutative probability and its classical analogues.

\begin{proposition}\label{prop:1}
	There exists a pair of non-symmetric probability measures $\mu, \nu$ with different supports and different variances such that
	\begin{enumerate}
		\item $\mu$ and $\nu$ are not symmetric,
		\item their Boolean convolution $\mu \uplus \nu$ is symmetric.
	\end{enumerate}
\end{proposition}
Note that the simplest pair of point masses, $\delta_1$ and $\delta_{-1}$, each with variance 0, combine under Boolean (or free) convolution to form the symmetric measure $\delta_0$, which has variance 0, despite neither $\delta_1$ nor $\delta_{-1}$ being symmetric.

Proposition~\ref{prop:1} demonstrates that Boolean convolution can generate symmetry out of non-symmetric components. A natural next step is to study the uniqueness of such symmetrizations. While Theorem~\ref{thm:boolean} highlights a rigidity in the minimal variance of Boolean symmetrizers, one might ask whether this minimizer is always unique. The following proposition shows that this is the case: 
\begin{proposition}\label{prop:2}
	The minimum-variance symmetrizer (with respect to Boolean convolution) of the Bernoulli measure $\mu=\mathrm{Bern}(p)=p\delta_1+(1-p)\delta_0$ is unique when $p\neq\tfrac12$.
\end{proposition}
The uniqueness result above naturally leads to the following question: while we have shown uniqueness of the minimum-variance symmetrizer for the Bernoulli measure when $p\neq \tfrac12$, it is not clear whether this phenomenon extends to all nonsymmetric measures. In fact, we expect that uniqueness may fail more generally.

\begin{conjecture}\label{conj:nonsymmetric_nonunique}
	There exists a nonsymmetric probability measure $\mu$ such that the minimum-variance symmetrizer (with respect to Boolean/free convolution) of $\mu$ is not unique.
\end{conjecture}

Building on this, Proposition~\ref{prop:two_symmetric_symmetrizers} provides an explicit case where two distinct symmetric symmetrizers with the same variance exist for the same measure. Finally, Proposition~\ref{prop:free_two_symmetrizers} establishes that a similar non-uniqueness also occurs in the free convolution setting.

\begin{proposition}\label{prop:two_symmetric_symmetrizers}
	There exists a probability measure $\mu$ and two distinct symmetric probability measures $\nu_1,\nu_2$ with the same variance such that both $\nu_1$ and $\nu_2$ symmetrize $\mu$ under Boolean convolution; that is,
	\[
	\mu\uplus \nu_1 \quad\text{and}\quad \mu\uplus \nu_2
	\]
	are symmetric measures, while $\nu_1\neq\nu_2$ and $\mathrm{Var}(\nu_1)=\mathrm{Var}(\nu_2)$.
\end{proposition}

\begin{proposition}\label{prop:free_two_symmetrizers}
	There exists a probability measure $\mu$ and two distinct symmetric probability measures $\nu_1,\nu_2$ with the same variance such that both $\nu_1$ and $\nu_2$ symmetrize $\mu$ under free additive convolution; that is,
	\[
	\mu\boxplus \nu_1 \quad\text{and}\quad \mu\boxplus \nu_2
	\]
	are symmetric measures, while $\nu_1\neq\nu_2$ and $\mathrm{Var}(\nu_1)=\mathrm{Var}(\nu_2)$.
\end{proposition}

\section{Applications: Quantum Variance Bounds for Two-Level Systems} \label{sec:app}

To illustrate the potential relevance of the symmetry–variance principle in a quantum context,
we consider a simple but paradigmatic example: a two-level quantum system with density matrix
\[
\rho = 
\begin{pmatrix}
	p & 0 \\
	0 & 1-p
\end{pmatrix}, \qquad 0 < p < 1.
\]
Let $A$ be a Hermitian observable acting on $\mathbb{C}^2$, written in the Pauli basis as
\[
A = a_0 I + \mathbf{a} \cdot \boldsymbol{\sigma}
= a_0 I + a_1 \sigma_x + a_2 \sigma_y + a_3 \sigma_z,
\]
where $\boldsymbol{\sigma} = (\sigma_x, \sigma_y, \sigma_z)$ are the Pauli matrices.
The quantum variance of $A$ with respect to $\rho$ is given by
\[
\Var_{\rho}(A) = \Tr(\rho A^2) - \big(\Tr(\rho A)\big)^2.
\]

\vspace{1ex}
\noindent
\textbf{Symmetry and Minimization.}
Suppose the observable $A$ is required to be \emph{symmetric} with respect to the spectral distribution of $\rho$,
that is, invariant under the exchange $p \leftrightarrow 1-p$ up to unitary conjugation.
In the classical setting, our main theorem shows that such a symmetry constraint yields a universal
lower bound for the variance, independent of the particular distribution of the underlying variable.
Here, we observe that the same phenomenon persists at the quantum level.

By direct computation,
\[
\Tr(\rho A) = a_0 + a_3(2p-1), \qquad
\Tr(\rho A^2) = a_0^2 + |\mathbf{a}|^2 + 2a_0 a_3 (2p-1).
\]
Hence
\[
\Var_{\rho}(A) = |\mathbf{a}|^2 - a_3^2 (2p-1)^2.
\]
The symmetry constraint implies that $\Var_{\rho}(A)$ attains its minimum when the
asymmetric component $a_3$ vanishes, i.e.\ when the observable has no alignment
with the direction of the bias of $\rho$. Thus
\[
\Var_{\rho}(A) \ge \Var_{\text{sym}}(A) = a_1^2 + a_2^2,
\]
with equality precisely for $a_3 = 0$.

\vspace{1ex}
\noindent
\textbf{Connection with the Symmetry–Variance Principle.}
This explicit computation shows that symmetry under spectral exchange
acts as a \emph{variance–rigidifying mechanism}: it enforces a universal
lower bound that depends only on the ``transverse'' components of the observable.
In the language of noncommutative probability, this is a concrete instance of
the general variance bound established in the free and Boolean settings:
the variance cannot be reduced beyond the symmetric envelope determined by the
underlying independence structure.

In quantum information theory, such symmetry-induced bounds describe
fundamental constraints on the dispersion of measurement outcomes for
observables compatible with a given state symmetry. They thus provide a
variance-based analogue of uncertainty relations, highlighting that the
classical, free, and Boolean cases share a common structural mechanism:
symmetry dictates the minimal achievable spread of expectation values.
This conceptual bridge suggests potential applications to the study of
quantum channels and variance-based entropic inequalities.

\section{Future Directions} \label{sec:future} 

The present study establishes variance bounds under symmetry constraints for classical, free, and Boolean probability using projections as the noncommutative analogue of Bernoulli variables. 
Several directions for future research naturally arise:

\begin{enumerate}
	\item \textbf{The critical case $p=\tfrac12$:} The case $p = \tfrac12$ is excluded from the theorem. The above argument breaks down because the dual-function construction requires division by $1-2p$. It remains an open problem to determine the exact minimum of $\tau(y^2)$ for $p=\tfrac12$.
	\item \textbf{Free analogue of the Boolean symmetry phenomenon:} Does there exist $\mu,\nu$ (non-symmetric probability measures with different supports and different variances) such that both $\mu\boxplus\nu$ and $\mu\uplus\nu$ is symmetric?
	
	There is substantial evidence that the answer to this question is affirmative.  In particular, Bercovici and Voiculescu show constructions leading to ``collapse'' to a semicircular law: see Corollary~4 of \cite{bercovici1995superconvergence}, which exhibits non-semicircular laws $\mu$ and $\nu$ with
	\[
	\mu\boxplus\nu=\text{(semicircular law)}.
	\]
	Those examples can be chosen so that $\mu$ and $\nu$ are not symmetric and have different supports; depending on the construction one may also arrange distinct second-moment behaviour (indeed in natural examples one measure may lack a finite variance while the other has one).
		
	An explicit and closely related example appears in \cite{hotta2023freely}. Theorem~4.1 together with Example~4.2 in that paper give the identity
	\[
	\mathrm{MP}\left(\tfrac{1}{4}\right)\boxplus \rho_{1,1,1/4}=C_{1},
	\]
	where $\mathrm{MP}(\lambda)$ is the Marchenko–Pastur law with parameter $\lambda$, $C_{t}$ is the Cauchy distribution with parameter $t$, and $\rho_{1,1,1/4}$ is a probability measure that is non-symmetric and has unbounded support.  Note that $\mathrm{MP}(1/4)$ is not symmetric about $0$, and the formula above therefore provides a concrete instance in which a non-symmetric law free-convolves with another non-symmetric law to give a symmetric (in this case Cauchy) law.
	
	\item \textbf{Extension to other noncommutative independences:} It would be interesting to investigate whether similar variance inequalities hold in the context of monotone, anti-monotone, or conditionally free independence.
	\item \textbf{Higher-dimensional and operator-valued settings:} One could consider symmetrization problems for families of projections or more general self-adjoint operators, possibly in matrix- or operator-valued frameworks.
	\item \textbf{Functional inequalities and concentration:} Exploring connections between symmetry-induced variance bounds and concentration inequalities or noncommutative analogues of classical functional inequalities could provide deeper structural insights.
	\item \textbf{Applications to quantum information:} Since projections and noncommutative distributions often model quantum events, understanding symmetry constraints might lead to new bounds for quantum variance, entanglement measures, or uncertainty relations.
\end{enumerate}

These avenues suggest that the interplay between symmetry, independence, and variance in noncommutative probability has rich potential for further exploration.

\section{Proof of the Main Results} \label{sec:proofs} 

In this section we provide the proofs of Theorems~\ref{thm:free} and~\ref{thm:boolean} followed by the proof of Proposition~\ref{prop:1}. 
The strategy is to adapt the classical variance bound under symmetry to the settings of free and Boolean independence, making use of cumulant techniques and suitable duality arguments.

\begin{proof}[Proof of Theorem~\ref{thm:free}]
	Since $e$ is a projection with $\phi(e)=p$, the assumption that $e+y$ has symmetric distribution implies
	\[
	\phi(e+y)=0 \quad\Rightarrow\quad \phi(y)=-p.
	\]
	Thus we are seeking the smallest possible value of $\phi(y^2)$ among self-adjoint $y$ satisfying this symmetry and free independence condition.  
	
	A simple candidate is $y$ distributed as $-e$, which clearly symmetrizes $e$ and yields $\phi(y^2)=p$. Hence the optimal value cannot exceed $p$. We must show that no other admissible $y$ can give a smaller variance.  
	
	To do so, it is useful to recast the problem as a convex optimization: $\phi(y^2)$ is linear in the distribution of $y$, while the constraints (free independence and symmetry of $e+y$) are likewise linear. Write $q:=1-p\in(0,1)$. By duality principles, it suffices to construct a test function $\psi:\mathbb{R}\to\mathbb{R}$ satisfying:
	\begin{enumerate}
		\item $\psi$ is odd, i.e. $\psi(-t)=-\psi(t)$ for all $t$;
		\item for every $t\in\mathbb{R}$,
		\[
		q\psi(t)+p\psi(1+t)\;\leq\; t^2-p.
		\]
	\end{enumerate}
	
	Indeed, if such a $\psi$ exists, then for any $y$ symmetrizing $e$ we compute
	\[
	\phi(\psi(e+y))=0,
	\]
	since $\psi$ is odd and $e+y$ has symmetric law. Using free independence of $e$ and $y$, this identity expands to
	\[
	0 = q\,\phi(\psi(y)) + p\,\phi(\psi(1+y)) = \phi\big(q\psi(y)+p\psi(1+y)\big).
	\]
	By condition (2), we deduce
	\[
	0 \leq \phi(y^2)-p,
	\]
	which gives the desired inequality $\phi(y^2)\geq p$.
	
	It remains to produce such a function $\psi$. When $p=\tfrac12$ this is impossible, since in that case one can find a symmetrizer with $\phi(y^2)=0$, contradicting the inequality above. For $p\neq \tfrac12$, however, we can explicitly construct one.  
	
	Define the “sawtooth” function $h:\mathbb{R}\to\mathbb{R}$ by
	\[
	h(t)=t \quad \text{for } -\tfrac12<t<\tfrac12,\qquad h(t+1)=-h(t)\ \text{for all }t.
	\]
	This $h$ is odd and piecewise linear. Moreover, at $t=0$ and $t=-1$ the curves $h(t)$ and the parabola $t(t+1)$ meet with the same slope, and convexity ensures $h(t)\leq t(t+1)$ everywhere.  
	
	Now set
	\[
	\psi(t)=\frac{h(t)}{\,q-p\,}-t.
	\]
	Clearly $\psi$ is odd. A short computation yields
	\begin{align*}
		q\psi(t)+p\psi(1+t) 
		&= q\Big(\frac{h(t)}{q-p}-t\Big) + p\Big(\frac{h(1+t)}{q-p}-(1+t)\Big) \\
		&= \frac{qh(t)+ph(1+t)}{q-p} - (q+p)t - p \\
		&= h(t) - t - p \\
		&\leq t(t+1)-t-p \;=\; t^2-p,
	\end{align*}
	as required.  
	
	Thus such a $\psi$ exists, proving $\phi(y^2)\geq p$ for every free symmetrizer $y$, with equality attained by $y\stackrel{d}{=}-e$. 
\end{proof}

\begin{proof}[Proof of Theorem~\ref{thm:boolean}]
	The argument follows the same lines as the proof of Theorem~\ref{thm:free}. 
	For this reason, we omit the details here.
\end{proof}

\begin{proof}[Proof of Proposition~\ref{prop:1}]
	We briefly recall the necessary definitions. Let $\mu$ be a probability measure on $\mathbb{R}$.  
	
	\begin{itemize}
		\item The \emph{Cauchy transform} of $\mu$ is
		\[
		G_\mu(z) := \int_{\mathbb{R}} \frac{1}{z - x}\, d\mu(x), \qquad z \in \mathbb{C}\setminus \mathrm{supp}(\mu).
		\]
		\item The \emph{K-transform} (Boolean cumulant transform) of $\mu$ is defined as
		\[
		K_\mu(z) := z - \frac{1}{G_\mu(z)}.
		\]
		\item The \emph{Boolean convolution} $\mu \uplus \nu$ of two probability measures $\mu, \nu$ is the unique measure satisfying
		\[
		K_{\mu \uplus \nu}(z) = K_\mu(z) + K_\nu(z).
		\]
		\item A probability measure $\mu$ on $\mathbb{R}$ with all moments finite is symmetric about $0$ if and only if its Cauchy transform $G_\mu(z)$, or equivalently its K-transform $K_\mu(z)$, is an odd function of $z$, that is,
		\[
		G_\mu(-z) = -G_\mu(z) \quad \text{or} \quad K_\mu(-z) = -K_\mu(z)
		\]
		for all $z$ in their domain of definition.
		
	\end{itemize}
	Now, consider the non-symmetric probability measures
	\[
	\mu_1 = \frac{3}{4}\delta_1 + \frac{1}{4}\delta_{-1}, \quad 
	\mu_2 = \frac{1}{4}\delta_{1} + \frac{3}{4}\delta_{-1}, 
	\]
	and the symmetric measure 
	\[\mu_3 = \frac{1}{2}\delta_{-1} + \frac{1}{2}\delta_{1}.
	\]Their $K$-transforms are
	\[
	K_{\mu_1}(z) = \frac{z+2}{2z+1}, \qquad 
	K_{\mu_2}(z) = \frac{2-z}{2z-1}, \qquad 
	K_{\mu_3}(z) = \frac{1}{z}.
	\]
	Let $\mu = \mu_1$ and $\nu = \mu_2 \uplus \mu_3$. Observe that both $K_\mu$ and $K_\nu$ are not odd functions and consequently the measures are not symmetric. By the definition of the Boolean convolution, we have
	\[
	K_{\mu \uplus \nu}(z) = K_{\mu}(z) + K_{\nu}(z) = \frac{6z}{4z^2-1}+\frac{1}{z},
	\]
	which is an odd function. Therefore, $\mu \uplus \nu$ corresponds to a symmetric measure. 
	
	A simple computation then yields the variances: $\mathrm{Var}(\mu) = \mathrm{Var}(\mu_1) = 3/4$ and $\mathrm{Var}(\nu) = \mathrm{Var}(\mu_2 \uplus \mu_3) = 7/4$.
	Thus, $\mu$ and $\nu$ have different variances, and have different supports. Hence, we have provided the required pair of measures satisfying all the conditions of the proposition.
\end{proof}

\begin{proof}[Proof of Proposition~\ref{prop:2}]
	The Boolean $K$--transform of $\mu$ is given by,
	\[
	K_\mu(z)=\frac{pz}{z-q}=p+\sum_{n\ge1}\frac{p q^n}{z^n},\qquad |z|>q.
	\]
	So its Laurent expansion at infinity begins $K_\mu(z)=p+\dfrac{p q}{z}+O(z^{-2})$. 
	
	Let $\nu$ be any probability measure such that $\mu\uplus\nu$ is symmetric. In terms of $K$--transforms this symmetry is equivalent to the oddness condition
	\[
	K_\mu(z)+K_\nu(z)\ \text{is an odd function of }z,
	\]
	i.e.
	\[
	K_\nu(z)=-K_\mu(-z)+h(z),
	\]
	for some function $h$ analytic on $\mathbb C^+$ which is itself odd: $h(-z)=-h(z)$. (Any odd analytic $h$ gives an odd sum; conversely the difference between two solutions must be odd.) Expand at infinity. Using the expansion of $K_\mu$ we have
	\[
	-K_\mu(-z)=-p+\sum_{n\ge1}(-1)^{n+1}\frac{p q^n}{z^n}
	= -p+\frac{p q}{z}-\frac{p q^2}{z^2}+\frac{p q^3}{z^3}+\cdots.
	\]
	Since $h$ is odd its Laurent expansion at infinity contains only odd powers of $z^{-1}$:
	\[
	h(z)=\sum_{m\ge0}\frac{a_{2m+1}}{z^{2m+1}} = \frac{a_1}{z}+\frac{a_3}{z^3}+\cdots.
	\]
	Therefore the $1/z$--coefficient of $K_\nu$ equals
	\[
	\text{coeff}_{1/z}(K_\nu)=p q + a_1.
	\]
	Recall that for a probability measure the variance (when finite) is read off as the $1/z$--coefficient of its $K$--transform; hence among all symmetrizers $\nu$ the variance is minimized exactly when $a_1$ is as small as possible. Since $a_1$ is a real number, the infimum of attainable variances is at least $p q + \inf a_1$ where the infimum is over those odd analytic $h$ for which $K_\nu$ corresponds to some probability measure.
	
	We now show that the only admissible choice of $h$ that does not increase the $1/z$--coefficient (compared with $a_1=0$) is $h\equiv0$. Equivalently, any nonzero odd analytic perturbation $h$ which does not change the $1/z$ coefficient (i.e.\ has $a_1=0$ but some $a_{2m+1}\neq0$ for $m\ge1$) cannot occur in a bona fide $K$--transform because it violates the Nevanlinna (Stieltjes) property of the corresponding Cauchy transform.
	
	To see this, let $\nu_0$ be the specific symmetrizer given by reflection,
	\[
	\nu_0=q\delta_0+p\delta_{-1},
	\]
	whose $K$--transform is the explicit rational function
	\[
	K_{\nu_0}(z)=-\frac{p z}{z+q} = -p + \frac{p q}{z} - \frac{p q^2}{z^2} + \frac{p q^3}{z^3} -\cdots,
	\]
	so $K_{\nu_0}=-K_\mu(-z)$ and in particular $a_1=0$ for the choice $h\equiv0$. The corresponding Cauchy transform is
	\[
	G_{\nu_0}(z)=\frac{q}{z}+\frac{p}{z+1},
	\]
	which is a bona fide Stieltjes transform.
	
	Suppose, for contradiction, there exists a nonzero odd analytic $h$ with $a_1=0$ (so the first two Laurent coefficients of $K_\nu$ match those of $K_{\nu_0}$) and such that
	\[
	K_{\nu_1}(z):=K_{\nu_0}(z)+h(z)
	\]
	is a valid $K$--transform of some probability measure $\nu_1$. Then the corresponding Cauchy candidate is
	\[
	G_{\nu_1}(z)=\frac{1}{z-K_{\nu_1}(z)}=\frac{1}{(z-K_{\nu_0}(z))-h(z)}=\frac{G_{\nu_0}(z)}{1-h(z)G_{\nu_0}(z)}.
	\]
	Because $h$ is odd with $a_1=0$, its leading term at infinity is of order $z^{-3}$: there exists $\alpha\neq0$ with
	\[
	h(z)=\frac{\alpha}{z^3}+o(z^{-3}),\qquad |z|\to\infty.
	\]
	
	Now examine the behavior of $G_{\nu_1}(z)$ near the real axis. Take $z=iy$ with $y>0$ small. As $y\downarrow0$,
	\[
	G_{\nu_0}(iy)=\frac{q}{iy}+\frac{p}{1+iy} = -\,i\frac{q}{y} + O(1).
	\]
	Hence
	\[
	h(iy)G_{\nu_0}(iy)=\Big(\frac{\alpha}{(iy)^3}+o(y^{-3})\Big)\Big(-i\frac{q}{y}+O(1)\Big)= -\alpha\frac{q}{y^4}+o(y^{-4}).
	\]
	Thus for sufficiently small $y>0$ the quantity $1-h(iy)G_{\nu_0}(iy)$ is real and its sign is determined by the sign of $-\alpha q/y^4$. In particular there exist arbitrarily small $y>0$ for which $1-h(iy)G_{\nu_0}(iy)$ is negative (if $\alpha>0$) or positive but with magnitude $\gg1$ (if $\alpha<0$). In either case the quotient
	\[
	G_{\nu_1}(iy)=\frac{G_{\nu_0}(iy)}{1-h(iy)G_{\nu_0}(iy)}
	\]
	has imaginary part of the wrong sign for a Stieltjes transform: indeed for those small $y$ with $1-h(iy)G_{\nu_0}(iy)<0$ we find $G_{\nu_1}(iy)\approx ( -i q/y)/(-C)=+i(q/(yC))$ which has positive imaginary part, contradicting the requirement $\Im G_{\nu_1}(z)<0$ for $z\in\mathbb C^+$. Thus $G_{\nu_1}$ cannot be a Cauchy transform of a probability measure.
	
	This contradiction shows that no nonzero odd analytic $h$ with $a_1=0$ is admissible. Consequently the minimal possible $1/z$--coefficient for a symmetrizer is attained uniquely by $h\equiv0$, i.e.\ by $\nu_0=q\delta_0+p\delta_{-1}$. Equivalently, the minimum-variance Boolean symmetrizer of $\mu$ is unique (and equals the reflected Bernoulli) when $p\neq\tfrac12$.
\end{proof}

\begin{proof}[Proof of Proposition~\ref{prop:two_symmetric_symmetrizers}]
	Take
	\[
	\mu=\tfrac12\delta_{-1}+\tfrac12\delta_{1},\qquad
	\nu_1=\tfrac12\delta_{-2}+\tfrac12\delta_{2},
	\]
	and let \(\nu_2\) be the semicircular law scaled by \(2\), i.e.
	\[
	\nu_2(dx)=\frac{1}{4\pi}\sqrt{16-x^2}\,\mathbf{1}_{[-4,4]}(x)\,dx.
	\]
	All three measures are symmetric about \(0\). Their Boolean \(K\)-transforms are
	\[
	K_\mu(z)=\frac{1}{z},\qquad
	K_{\nu_1}(z)=\frac{4}{z},\qquad
	K_{\nu_2}(z)=\text{an odd function with Laurent expansion } \frac{4}{z}+O(z^{-3}).
	\]
	(Indeed, for a symmetric two-point law \(\tfrac12(\delta_{-a}+\delta_a)\) one has \(K(z)=a^2/z\), hence \(K_{\nu_1}(z)=4/z\); and scaling the standard semicircle by \(2\) multiplies every Boolean cumulant by \(2^2\), so its \(1/z\)-coefficient is \(4\).)
	
	Since each \(K\)-transform above is odd, the sums
	\[
	K_\mu(z)+K_{\nu_1}(z)=\frac{1}{z}+\frac{4}{z}=\frac{5}{z},
	\qquad
	K_\mu(z)+K_{\nu_2}(z)=\frac{1}{z}+K_{\nu_2}(z)
	\]
	are odd functions; hence both \(\mu\uplus\nu_1\) and \(\mu\uplus\nu_2\) are symmetric probability measures. Moreover
	\[
	\mathrm{Var}(\nu_1)=4,\qquad \mathrm{Var}(\nu_2)=4,
	\]
	so \(\nu_1\) and \(\nu_2\) are distinct symmetric measures with the same variance that both symmetrize \(\mu\) under Boolean convolution.
\end{proof}

\begin{proof}[Proof of Proposition~\ref{prop:free_two_symmetrizers}]
	Take $\mu$ to be the standard semicircle law $\sigma$ on $[-2,2]$ (so that its free \(R\)-transform is $R_\sigma(z)=z$ and $\sigma$ is symmetric with variance $1$). Choose two distinct symmetric probability measures with equal variance $4$, for example
	\[
	\nu_1=\tfrac12\delta_{-2}+\tfrac12\delta_{2}\qquad\text{and}\qquad
	\nu_2=\text{the semicircle law scaled by }2,
	\]
	the latter having density $(8\pi)^{-1}\sqrt{16-x^2}\,\mathbf 1_{[-4,4]}(x)\,dx$. Both $\nu_1$ and $\nu_2$ are symmetric about $0$ and satisfy $\mathrm{Var}(\nu_1)=\mathrm{Var}(\nu_2)=4$.
	
	For a symmetric probability measure the free cumulants of odd order vanish, hence its $R$-transform is an odd function of $z$. Consequently $R_{\nu_1}$ and $R_{\nu_2}$ are odd functions; their linear (coefficient of $z$) term equals the variance of the corresponding measure, so both have the same linear coefficient $4$. In particular both $R_{\nu_1}$ and $R_{\nu_2}$ are odd and distinct (they differ in higher-order free cumulants).
	
	Since free convolution corresponds to addition of $R$-transforms,
	\[
	R_{\mu\boxplus\nu_j}(z)=R_\mu(z)+R_{\nu_j}(z),\qquad j=1,2.
	\]
	Here $R_\mu(z)=z$ is odd and each $R_{\nu_j}(z)$ is odd, so the sums $R_\mu+R_{\nu_j}$ are odd functions. An odd $R$-transform corresponds to a symmetric distribution, therefore both $\mu\boxplus\nu_1$ and $\mu\boxplus\nu_2$ are symmetric. Finally $\nu_1\neq\nu_2$ by construction while $\mathrm{Var}(\nu_1)=\mathrm{Var}(\nu_2)=4$, completing the proof.
\end{proof}

\section*{Acknowledgements}

The author would like to express his sincere gratitude to Prof.~Rajat Subhra Hazra and Prof.~Mokshay Madiman for their careful reading of the manuscript and for numerous valuable comments and suggestions that greatly improved the clarity of the exposition. The author is also deeply thankful to Prof.~Yuki Ueda for kindly pointing out the existing results relevant to the discussion in the section on future directions, which provided important insight and guidance for this work.


\begin{thebibliography}{99}

\bibitem{bercovici1995superconvergence}
H.~Bercovici and D.~Voiculescu,
\newblock Superconvergence to the central limit and failure of the Cramér theorem for free random variables,
\newblock {\em Probability Theory and Related Fields}, vol.~103, no.~2, pp.~215--222, 1995.

\bibitem{hotta2023freely}
I.~Hotta, W.~M{\l}otkowski, N.~Sakuma, and Y.~Ueda,
\newblock On freely quasi-infinitely divisible distributions,
\newblock {\em ALEA, Latin American Journal of Probability and Mathematical Statistics}, vol.~20, pp.~941--971, 2023.

\bibitem{Kagan1999}
A.~Kagan, C. L.~Mallows, L. A.~Shepp, R. J.~Vanderbei, and Y. Vardi, 
\newblock Symmetrization of binary random variables.
\newblock \emph{Bernoulli}, Vol. 5(6): 1013--1020, 1999.

\bibitem{Pal2008}
S.~Pal,
\newblock Symmetrization of Bernoulli,
\newblock \emph{Electronic Communications in Probability}, 13:194--197, 2008.

\bibitem{SpeicherWoroudi}
R.~Speicher and R.~Woroudi,
\newblock Boolean convolution,
\newblock In: D.~Voiculescu (ed.), \emph{Free Probability Theory}, Fields Inst. Commun. vol.~12, AMS, 1997.

\bibitem{Voiculescu1993}
D.~Voiculescu,
\newblock Free convolution of measures with unbounded support,
\newblock \emph{Indiana University Mathematics Journal}, 42(3):733--773, 1993.

\end{thebibliography}
\end{document}